\def\cal{\mathcal}
\def\Bbb{\mathbb}
\def\r{\rangle}
\def\l{\langle}
\def\bar{\overline}
\newtheorem{thm}{Theorem}[section]
\newtheorem{prop}[thm]{Proposition}
\newtheorem{exm}[thm]{Example}
\newtheorem{lemma}[thm]{Lemma}
\newtheorem{cor}[thm]{Corollary}
\newtheorem{defn}[thm]{Definition}
\newtheorem{rem}[thm]{Remark}
\numberwithin{equation}{section}
\begin{document}
\date{\today}
\title[Fundamental groups of orbit configuration spaces]
{A four-term exact sequence of fundamental\\ groups of orbit configuration spaces}
\author[S.K. Roushon]{S.K. Roushon}
\address{School of Mathematics\\
Tata Institute\\
Homi Bhabha Road\\
Mumbai 400005, India}
\email{roushon@math.tifr.res.in} 
\urladdr{http://mathweb.tifr.res.in/\~\!\!\! roushon/}
\begin{abstract} We deduce that the
  fundamental groups of the orbit configuration spaces of an effective and properly
  discontinuous action of a discrete group on a connected aspherical
  $2$-manifold, with isolated fixed points, fit into a four-term
  exact sequence. This comes as a consequence of the four-term exact
  sequence of orbifold pure braid groups (\cite{Rou20}, \cite{JF23} and \cite{Rou24-1}).
  The proof relates these two exact sequences and also draws a new
  consequence (Corollary \ref{free}) on the later one.\end{abstract}
 
\keywords{Orbit configuration space, orbifold pure braid group.}

\subjclass{Primary 55R80, 55P65; Secondary 20F99, 57R18}
\maketitle

\section{Introduction}
We recall the definition of an orbit configuration space.

\begin{defn}\label{ecs}{\rm (\cite{Xic97}) Let $G$ be a discrete group
    acting on a topological space $X$, and $n\geq 1$
    be an integer.
    The {\it orbit configuration space}
    ${\cal {O}}_n(X,G)$ of the pair $(X,G)$ is the space of all $n$-tuple
    of points of $X$ with distinct orbits. That is, 
    $${\cal O}_n(X,G)=\{(x_1,x_2,\ldots, x_n)\in X^n\ |\ Gx_i\cap Gx_j=\emptyset,\ \text{for}\ i\neq j\}.$$
    If $G$ is the trivial group,
    then ${\cal O}_n(X,G)$ is abbreviated as ${\cal O}_n(X)$, and it is the
 usual configuration space (\cite{FN62}) of ordered $n$-tuple of distinct points of $X$.
 $*_n\in {\cal O}_n(X,G)$ will denote a point 
 whose coordinates have trivial isotropy
groups. $\bar *_n$ will denote its image in ${\cal O}_n(X/G)$.}\end{defn}

In this note, we always consider an effective and properly
discontinuous (\cite{Thu97}, Definition 3.5.1) action of $G$ on a
smooth connected manifold $X$ without boundary.

 First, we recall the celebrated Fibration Theorem of Fadell and
Neuwirth for configuration spaces. Consider the projection
  $p:{\cal O}_n(X, G)\to {\cal O}_{n-1}(X, G)$ to
  the first $n-1$ coordinates and let $p(*_n)=*_{n-1}$.

\begin{thm}{\rm (\cite{FN62}, Fibration Theorem)} \label{FN}
  $p:{\cal O}_n(X)\to {\cal O}_{n-1}(X)$ is a locally trivial fibration with
  fiber $X-\{(n-1)\ \text{points}\}$.\end{thm}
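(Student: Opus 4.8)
The plan is to prove local triviality of $p$ directly, by constructing over each point of the base ${\cal O}_{n-1}(X)$ an explicit product neighbourhood, using the manifold structure of $X$ to produce suitable families of ambient homeomorphisms. Fix $c=(c_1,\dots,c_{n-1})\in{\cal O}_{n-1}(X)$. Since $X$ is a manifold without boundary I may choose pairwise disjoint open sets $U_1,\dots,U_{n-1}\subseteq X$ with $c_i\in U_i$ and each $U_i$ diffeomorphic to $\Bbb R^{\dim X}$. Because the $U_i$ are pairwise disjoint, $W:=U_1\times\cdots\times U_{n-1}$ is automatically contained in ${\cal O}_{n-1}(X)$, and it is an open neighbourhood of $c$ there. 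The fibre $p^{-1}(c)$ is exactly $F:=X-\{c_1,\dots,c_{n-1}\}$, a copy of $X$ with $n-1$ points removed; as $X$ is connected, all fibres of $p$ are homeomorphic to $F$, so it is enough to trivialise $p$ over $W$.

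The key step is to manufacture, for each $y=(y_1,\dots,y_{n-1})\in W$, a homeomorphism $h_y\colon X\to X$ such that $h_y(c_i)=y_i$ for every $i$, such that $h_y$ is the identity outside $U_1\cup\cdots\cup U_{n-1}$, and such that $(y,x)\mapsto h_y(x)$ is continuous on $W\times X$. I would build $h_y$ chart by chart: inside each $U_i\cong\Bbb R^{\dim X}$ there is a standard compactly supported ``push'' that carries $c_i$ to $y_i$, equals the identity near $\partial U_i$, and depends continuously (indeed smoothly) on the target point $y_i$; extending each such push by the identity and composing (the supports are disjoint, so the order is irrelevant) yields $h_y$. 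From the explicit formula one checks that $h_y$ is a homeomorphism with $h_c=\mathrm{id}$, and that both $(y,x)\mapsto h_y(x)$ and $(y,x)\mapsto h_y^{-1}(x)$ are continuous.

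Given this family, define $\Phi\colon p^{-1}(W)\to W\times F$ by $\Phi(y_1,\dots,y_{n-1},y_n)=\bigl((y_1,\dots,y_{n-1}),\,h_y^{-1}(y_n)\bigr)$, where $y=(y_1,\dots,y_{n-1})$. Since $h_y(c_i)=y_i$ and $y_n\notin\{y_1,\dots,y_{n-1}\}$, we get $h_y^{-1}(y_n)\notin\{c_1,\dots,c_{n-1}\}$, so $\Phi$ indeed lands in $W\times F$; it commutes with the projections onto $W$, and its inverse is $\bigl((y_1,\dots,y_{n-1}),z\bigr)\mapsto(y_1,\dots,y_{n-1},h_y(z))$. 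Continuity of $\Phi$ and $\Phi^{-1}$ follows from continuity of $\{h_y^{\pm1}\}$, so $\Phi$ is a homeomorphism over $W$. As $c$ was arbitrary, $p$ is a locally trivial fibration with fibre $X-\{(n-1)\ \text{points}\}$.

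The main obstacle is the middle step: arranging that the $h_y$ depend continuously on $y$ and are genuine homeomorphisms of all of $X$ rather than merely of the charts. Everything else — identifying the fibre, assembling $\Phi$, and checking compatibility with $p$ — is then formal. The hypotheses on $X$ are used exactly twice: to obtain the Euclidean charts and compactly supported pushes, and (via connectedness) to say that all fibres are homeomorphic; note also that $p^{-1}(c)\neq\emptyset$ requires $X$ to have more than $n-1$ points, which holds whenever $\dim X\geq 1$.
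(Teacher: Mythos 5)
The paper offers no proof of Theorem \ref{FN}; it is simply quoted from \cite{FN62}. Your argument --- disjoint Euclidean charts $U_i$ about the $c_i$, a continuously varying family of compactly supported ambient homeomorphisms $h_y$ pushing each $c_i$ to $y_i$ (e.g.\ $x\mapsto x+(1-|x|)u$ inside a unit ball, extended by the identity), and the resulting trivialization $\Phi$ over $W=U_1\times\cdots\times U_{n-1}$ --- is exactly the original Fadell--Neuwirth construction, and it is correct, including the check that $\Phi$ lands in $W\times F$ and commutes with the projection.
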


In \cite{Xic97}, the following fibration theorem was established
for orbit configuration spaces.

\begin{thm}{\rm (\cite{Xic97})}\label{Xic} If the action of $G$ on $X$
  is properly discontinuous and free then $p:{\cal O}_n(X, G)\to {\cal
    O}_{n-1}(X, G)$ is a locally trivial fibration with
  fiber $X-\{(n-1)\ {\text orbits}\}$.\end{thm}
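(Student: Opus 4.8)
\smallskip
\noindent\emph{Proof idea.}
The plan is to run the Fadell--Neuwirth trivialization of Theorem~\ref{FN} equivariantly. The hypotheses give that $\pi\colon X\to X/G$ is a covering projection onto a connected Hausdorff manifold $X/G$ of the same dimension; this is the only structural input needed. (It is also true, though I will not use it as the engine of the proof, that $\mathcal O_m(X,G)$ is the preimage of the ordinary configuration space $\mathcal O_m(X/G)$ under the $G^m$-covering $X^m\to(X/G)^m$, so $\mathcal O_m(X,G)\to\mathcal O_m(X/G)$ is itself a covering.) First I would read off the fibre: for $b=(x_1,\dots,x_{n-1})\in\mathcal O_{n-1}(X,G)$,
$$p^{-1}(b)=\{x\in X\mid Gx\cap Gx_i=\emptyset,\ 1\le i\le n-1\}=X-\bigcup_{i=1}^{n-1}Gx_i,$$
which is $X$ with the $n-1$ disjoint closed discrete orbits $Gx_i$ deleted --- the asserted fibre.

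For local triviality near $b$, put $\bar x_i=\pi(x_i)$; these are $n-1$ distinct points of the manifold $X/G$. I would pick pairwise disjoint coordinate balls $\bar D_i\ni\bar x_i$ that are evenly covered by $\pi$, and let $D_i\subset X$ be the sheet over $\bar D_i$ through $x_i$. Freeness forces $\pi^{-1}(\bar D_i)=\bigsqcup_{g\in G}gD_i$, and disjointness of the $\bar D_i$ gives $GD_i\cap GD_j=\emptyset$ for $i\ne j$; in particular $\prod_iD_i$ already lies inside $\mathcal O_{n-1}(X,G)$. Over a small enough product neighbourhood $U\ni b$ contained in $\prod_iD_i$, I would build for each $\vec y=(y_1,\dots,y_{n-1})\in U$ a $G$-equivariant homeomorphism $H_{\vec y}\colon X\to X$ that is the identity outside $\bigcup_iGD_i$, satisfies $H_{\vec y}(x_i)=y_i$, and varies continuously with $\vec y$, as does its inverse: take the standard continuously varying family of self-homeomorphisms $h^i_{y_i}$ of $D_i$ fixing a neighbourhood of $\partial D_i$ and sending $x_i$ to $y_i$ --- a bump-function radial push read off in the chart, exactly as in the proof of Theorem~\ref{FN} --- and set $H_{\vec y}=g\circ h^i_{y_i}\circ g^{-1}$ on $gD_i$, the identity elsewhere. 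Disjointness of the sheets makes this unambiguous, and equivariance is automatic.

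Finally I would assemble the trivialization. Being $G$-equivariant with $H_{\vec y}(x_i)=y_i$, the homeomorphism $H_{\vec y}$ carries $Gx_i$ bijectively onto $Gy_i$, hence restricts to a homeomorphism $X-\bigcup_iGx_i\to X-\bigcup_iGy_i=p^{-1}(\vec y)$; so
$$U\times\Bigl(X-\bigcup_{i=1}^{n-1}Gx_i\Bigr)\longrightarrow p^{-1}(U),\qquad(\vec y,z)\longmapsto(\vec y,H_{\vec y}(z))$$
is a homeomorphism over $U$ with continuous inverse $(\vec y,w)\mapsto(\vec y,H_{\vec y}^{-1}(w))$, which exhibits $p$ as a locally trivial fibration with fibre $X-\{(n-1)\ \text{orbits}\}$. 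The one place where something genuine happens is the equivariant interpolation: one must arrange the balls so that their \emph{entire} $G$-orbits are pairwise disjoint and freely permuted, and glue the one-ball pushes over all the sheets at once without collisions --- which is exactly where freeness, and not merely proper discontinuity, is used (equivalently, where one uses that $\pi$ is an honest covering onto a manifold). Everything else is the Fadell--Neuwirth argument carried along $G$-equivariantly.
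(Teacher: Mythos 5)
The paper does not prove this statement; it is imported verbatim from \cite{Xic97}, so there is no in-paper argument to compare against. Your proposal is correct and is essentially the standard proof from that reference: proper discontinuity plus freeness make $X\to X/G$ a covering onto a manifold, so one can choose coordinate balls whose entire $G$-orbits are pairwise disjoint and freely permuted, conjugate the usual Fadell--Neuwirth radial pushes across the sheets to get a continuously varying $G$-equivariant self-homeomorphism of $X$ moving $Gx_i$ onto $Gy_i$, and read off the local trivialization with fibre $X$ minus $n-1$ orbits.
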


The class of orbit configuration spaces is a very important class, and
appears in many branches of Mathematics and Physics
(\cite{Arn69} - \cite{Bot96}, \cite{BP02},
\cite{Coh76}, \cite{Coh95}, \cite{Nam49} and \cite{Tot96} -
\cite{Vas92}). Much work has been done in this 
area during the last couple of decades, in the particular case when
the action is properly discontinuous and free. See \cite{Xic14} and \cite{LS05} for some 
works and relevant literature on orbit configuration spaces.
Also see \cite{CLW21} and \cite{BiGa18} for some non-free action cases.
However, most of these studies are related to cohomological computations or to
know the homotopy type of the orbit configuration spaces. 
The fundamental groups of orbit configuration
spaces are interesting only in the cases of $2$-manifolds.
Since, ${\cal O}_n(X,G)$ is the complement of codimension $\geq 3$ submanifolds
of $X^n$ if dimension of $X$ is $\geq 3$, and
hence $X^n$ and ${\cal O}_n(X, G)$ will have isomorphic fundamental groups.

Applying the long exact homotopy sequence induced by a fibration and
by an induction argument, one can
deduce the following two conclusions from Theorem \ref{Xic}.

\begin{cor}\label{corollary} Consider a connected $2$-manifold
  $S\neq {\Bbb {RP}}^2, {\Bbb S}^2$, and assume that 
the action of $G$ on $S$ is properly discontinuous and free. Then,  

$\bullet$ ${\cal O}_n(S,G)$ is aspherical, that is $\pi_i({\cal O}_n(S,G))=\l 1\r$, for
all $i\geq 2$ and 

$\bullet$ there is a short exact sequence:

\centerline{
  \xymatrix{
    1\ar[r]&\pi_1(F, *_n)\ar[r]^{\!\!\!\!\!\!\!\!\! i_*}&
    \pi_1({\cal O}_n(S, G), *_n)\ar[r]^{\!\!\!\!\!\! p_*}&
    \pi_1({\cal O}_{n-1}(S, G), *_{n-1})\ar[r]&1.}}
Here, $F=p^{-1}(*_{n-1})=S-\{(n-1)\ {\text orbits}\}$ and $i:F\to {\cal
  O}_n(S, G)$ is the inclusion map.\end{cor}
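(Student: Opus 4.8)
\emph{Approach and base case.} The plan is to run the Fadell--Neuwirth bootstrap: induct on $n$, feeding Theorem~\ref{Xic} and the long exact homotopy sequence of the locally trivial fibration $p\colon \mathcal O_n(S,G)\to \mathcal O_{n-1}(S,G)$ into the inductive step. Note first that $\mathcal O_n(S,G)\neq\emptyset$, since $S$ is a connected $2$-manifold and the action is free, so $S$ has infinitely many orbits and no point has nontrivial isotropy. For the base case $n=1$ we have $\mathcal O_1(S,G)=S$, which is path-connected and aspherical: if $S$ is closed then, because $S\neq \mathbb S^2,\mathbb{RP}^2$, its universal cover is $\mathbb R^2$, and if $S$ is open it deformation retracts onto a $1$-complex. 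Either way $\pi_i(S)=1$ for all $i\ge 2$.

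\emph{The fiber.} Fix $n\ge 2$ and a basepoint $*_{n-1}$ whose coordinates $x_1,\dots,x_{n-1}$ have trivial isotropy, so that $F=p^{-1}(*_{n-1})=S-A$ with $A=Gx_1\cup\cdots\cup Gx_{n-1}$ a union of $n-1$ pairwise disjoint orbits (disjoint precisely because $*_{n-1}\in\mathcal O_{n-1}(S,G)$). Since the action is free and properly discontinuous, each orbit, and hence $A$, is a nonempty closed discrete subset of $S$. Two consequences follow: (i) deleting the closed codimension-$2$ set $A$ from the connected surface $S$ leaves it connected (any path in $S$ can be pushed off the discrete set $A$), so $\pi_0(F)=1$; and (ii) $S-A$ is a non-compact connected surface — a nonempty proper open subset of a connected manifold cannot be compact — hence it deformation retracts onto a $1$-complex and is aspherical, so $\pi_i(F,*_n)=1$ for all $i\ge 2$.

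\emph{Inductive step.} Assume $n\ge 2$ and that $\mathcal O_{n-1}(S,G)$ is path-connected and aspherical. The long exact homotopy sequence of $p$, whose fiber inclusion is the map $i\colon F\hookrightarrow \mathcal O_n(S,G)$ of the statement, reads
\[
\cdots\to \pi_i(F,*_n)\xrightarrow{\,i_*\,}\pi_i(\mathcal O_n(S,G),*_n)\xrightarrow{\,p_*\,}\pi_i(\mathcal O_{n-1}(S,G),*_{n-1})\to\pi_{i-1}(F,*_n)\to\cdots.
\]
For every $i\ge 2$ the two outer groups vanish, by (ii) and the inductive hypothesis respectively, so exactness forces $\pi_i(\mathcal O_n(S,G),*_n)=1$; together with path-connectedness (inherited from the fiber and the base) this gives the first bullet. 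For the second bullet, look at the tail
\[
\pi_2(\mathcal O_{n-1}(S,G),*_{n-1})\to \pi_1(F,*_n)\xrightarrow{\,i_*\,}\pi_1(\mathcal O_n(S,G),*_n)\xrightarrow{\,p_*\,}\pi_1(\mathcal O_{n-1}(S,G),*_{n-1})\to\pi_0(F).
\]
Its left-hand term is trivial by the inductive hypothesis, so $i_*$ is injective; its right-hand term is a single point by (i), so $p_*$ is surjective; and exactness in the middle is part of the sequence. This is exactly the claimed short exact sequence, and the induction closes.

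\emph{Where the work is.} The only step that really uses the standing hypotheses on the action is (ii): freeness and proper discontinuity are what make the orbits closed and discrete, so that $F$ is a genuine open $2$-manifold — in particular non-compact, hence a $K(\pi,1)$ — rather than a surface minus a subset with accumulation points. Once that is granted, the remainder is the routine Fadell--Neuwirth induction and no further subtlety appears; the same long exact sequence simultaneously yields asphericity of $\mathcal O_n(S,G)$ and the short exact sequence of fundamental groups.
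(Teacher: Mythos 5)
Your proposal is correct and follows exactly the route the paper indicates: the paper gives no detailed proof of Corollary \ref{corollary}, stating only that it follows from Theorem \ref{Xic} by the long exact homotopy sequence of the fibration and induction on $n$, which is precisely the Fadell--Neuwirth bootstrap you carry out. Your write-up simply supplies the details (asphericity and connectedness of the fiber, the base case $S\neq\mathbb S^2,\mathbb{RP}^2$) that the paper leaves implicit.
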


 We want to study the existence of the above type of short exact sequence
 when the action is not free. We will show that in this case
 $i_*$ is never injective. However, we have a hope that, even when the action is
 not free, asphericity of $S$ is preserved in ${\cal O}_n(S,G)$
 (\cite{Rou24-2}, Asphericity conjecture). See Remark
 \ref{asphericityC} for some examples when this is true.
 
Since the discrete group $G$ is
acting on $X$ effectively and properly discontinuously, $X/G$ has an orbifold structure and
$X\to X/G$ is an orbifold covering map (\cite{Thu91}, Proposition 5.2.6). An effective
action of a finite group on $X$ has this property. See Section
Appendix for a short 
introduction on orbifolds. Before we state our main result, recall that the underlying
topological space of a $2$-dimensional orbifold has a
manifold structure (\cite{Sco83}, p. 422, last para.). Hence, the {\it genus} of a
connected $2$-dimensional orbifold is defined as the genus of its
underlying space.

 We prove the following four-term exact sequence induced by $p$.
    
\begin{thm}\label{mt} Let $S$ be a connected $2$-manifold without boundary.
  Let a discrete group $G$ is acting on $S$ effectively, properly discontinuously
  and with isolated fixed points. In addition, 
  assume that $S\neq {\Bbb S}^2$ or ${\Bbb {RP}}^2$ and
  the underlying space of $S/G$ is ${\Bbb S}^2$ with at least one puncture, or of
    genus $\geq 1$ and orientable.
Then, $p$ induces the following four-term exact sequence.

\centerline{
  \xymatrix@C-=0.45cm{1\ar[r]&L(S, n-1)\ar[r]&\pi_1(F, *_n)\ar[r]^{\!\!\!\!\!\!\!\!\!\!\!\!\!\! i_*}&
    \pi_1({\cal O}_n(S, G), *_n)\ar[r]^{\!\!\!\!\!\! p_*}&\pi_1({\cal O}_{n-1}(S, G),*_{n-1})\ar[r]&1.}}
\noindent

Furthermore, $L(S, n-1)=\l 1\r$ if and only if $G$ also acts freely on $S$.
\end{thm}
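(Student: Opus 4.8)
The plan is to deduce the sequence by transporting the four-term exact sequence of orbifold braid groups of the $2$-orbifold $S/G$ (\cite{Rou20}, \cite{JF23}, \cite{Rou24-1}) along the orbifold covering $S\to S/G$. First I would record the covering-space bookkeeping. As the action is effective, properly discontinuous and has isolated fixed points, $S\to S/G$ is an orbifold covering with deck group $G$ (\cite{Thu91}). Passing to ordered $n$-tuples with pairwise disjoint orbits, the coordinatewise map ${\cal O}_n(S,G)\to{\Bbb C}_n$ becomes a regular orbifold covering with deck group $G^n$, where ${\Bbb C}_n$ is the $n$-th pure orbifold configuration space of $S/G$, i.e.\ the orbifold whose orbifold fundamental group is the pure orbifold braid group $P_n(S/G)$; here one uses that ${\cal O}_n(S,G)$ is connected, which holds under the hypotheses imposed on $S$ and $S/G$. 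Restricting this covering and its $(n-1)$-analogue over the fibres of $p$ and of the corresponding projection ${\Bbb C}_n\to{\Bbb C}_{n-1}$ through the (regular) base points shows that $F\to{\Bbb F}$ is a regular orbifold covering with deck group $G$, where ${\Bbb F}=(S/G)\setminus\{(n-1)\ \text{points}\}$. This assembles into a commutative ladder whose columns are the short exact sequences $1\to\pi_1(F)\to\pi_1^{orb}({\Bbb F})\to G\to 1$, $1\to\pi_1({\cal O}_n(S,G))\to P_n(S/G)\to G^n\to 1$ and its $(n-1)$-analogue, whose horizontal maps are induced by $i,p$ and their orbifold counterparts $\bar i_*,\bar p_*$, and whose middle row is the imported four-term sequence $1\to L^{orb}\to\pi_1^{orb}({\Bbb F})\xrightarrow{\bar i_*}P_n(S/G)\xrightarrow{\bar p_*}P_{n-1}(S/G)\to 1$.

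The one nonformal ingredient of the ensuing chase is that the monodromy homomorphism $\pi_1^{orb}({\Bbb F})\to G$ of $F\to{\Bbb F}$ factors as $\pi_1^{orb}({\Bbb F})\xrightarrow{\bar i_*}P_n(S/G)\to G^n\to G$, the last arrow being projection to the last coordinate: restricting ${\cal O}_n(S,G)\to{\Bbb C}_n$ over ${\Bbb F}\subset{\Bbb C}_n$ yields a disjoint union of copies of $F\to{\Bbb F}$, one for each lift of $\bar *_{n-1}$, and along each such copy the last coordinate records exactly the deck transformation of $F\to{\Bbb F}$. Consequently $L^{orb}=\ker(\bar i_*)\subseteq\ker\big(\pi_1^{orb}({\Bbb F})\to G\big)=\mathrm{im}\big(\pi_1(F)\hookrightarrow\pi_1^{orb}({\Bbb F})\big)$. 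Granting this, the top row $1\to L(S,n-1)\to\pi_1(F,*_n)\xrightarrow{i_*}\pi_1({\cal O}_n(S,G),*_n)\xrightarrow{p_*}\pi_1({\cal O}_{n-1}(S,G),*_{n-1})\to 1$ is exact by a straightforward diagram chase: surjectivity of $p_*$ and $\mathrm{im}\,i_*=\ker p_*$ follow by lifting elements along $\bar p_*$ and $\bar i_*$ and correcting the last-coordinate component by an element of $\pi_1^{orb}({\Bbb F})$ furnished by the factorization together with the injectivity of the columns, and $L(S,n-1):=\ker(i_*)=\pi_1(F)\cap L^{orb}$, which equals $L^{orb}$ by the displayed inclusion. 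Thus the theorem is reduced to showing: $L^{orb}=\langle 1\rangle$ if and only if $S/G$ has no cone points, i.e.\ if and only if $G$ acts freely.

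For the ``if'' direction, if $G$ acts freely then $S/G$ is an honest surface, ${\Bbb C}_n$ is a manifold, $P_n(S/G)$ is the ordinary pure braid group, and the middle row degenerates to the Fadell--Neuwirth sequence (Theorem \ref{FN}), so $L^{orb}=\langle 1\rangle$ (alternatively, quote Corollary \ref{corollary}). For the converse I would show that $i_*$ is never injective once the action has a fixed point. Pick $x_0\in S$ with isotropy group $\langle h\rangle$ --- necessarily finite cyclic, of some order $m\ge 2$ --- and, using isolatedness, an $h$-invariant disc $D\ni x_0$ on which $h$ acts by rotation through $2\pi/m$; choose the base configuration of ${\cal O}_{n-1}(S,G)$ so that its first coordinate is a point $x_1\in D$ very near $x_0$ and its remaining coordinates have $G$-orbits missing $D$, so that $F\cap D=D\setminus\{x_1,hx_1,\dots,h^{m-1}x_1\}$. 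Let $a\in\pi_1(F,*_n)$ be a small loop of the last strand about $x_1$, and $\ell\in\pi_1(F,*_n)$ a loop of the last strand about $\partial D$, which encircles $x_0$ together with all of $\{x_1,hx_1,\dots,h^{m-1}x_1\}$. Then $[\ell,a]\neq 1$ in the free group $\pi_1(F,*_n)$, while $i_*([\ell,a])=1$ in $\pi_1({\cal O}_n(S,G))$: pushing down to $P_n(S/G)$ one sees that the image of $\ell$ is an $m$-th power of the meridian of the orbifold locus ``last strand $=$ cone point or $=$ first strand'', and, by the cone-point relations in the presentation of the orbifold braid group (\cite{Rou20}), this $m$-th power commutes with the image of $a$. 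Hence $L(S,n-1)=L^{orb}\neq\langle 1\rangle$, which via the identification $L(S,n-1)=L^{orb}$ also yields the new consequence for the orbifold braid group sequence recorded in Corollary \ref{free}.

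The main obstacle is this last step: producing an explicit nontrivial element of $\ker(i_*)$ and verifying that it is killed in ${\cal O}_n(S,G)$. This rests on an honest analysis of the point-pushing subgroup of $P_n(S/G)$ and, in particular, of the relations a cone point of $S/G$ contributes to the orbifold braid group --- the sole place where the failure of freeness enters. Everything else is either formal (the diagram chase) or a transfer, along $S\to S/G$, of the four-term orbifold braid group sequence of \cite{Rou20}, \cite{JF23} and \cite{Rou24-1}.
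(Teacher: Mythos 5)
Your argument for the exactness of the four-term sequence is essentially the paper's: both transport the orbifold braid group sequence of Theorem \ref{skr-jf} for $\bar S=S/G$ along the orbifold coverings $F\to \bar F$, ${\cal O}_n(S,G)\to {\cal O}_n(\bar S)$, ${\cal O}_{n-1}(S,G)\to {\cal O}_{n-1}(\bar S)$, assemble the commutative ladder whose columns are the covering-space short exact sequences with quotients $G$, $G^n$, $G^{n-1}$, and chase. The paper packages the chase as a non-abelian Snake Lemma (Lemma \ref{nasl}) applied with trivial cokernels; your ``nonformal ingredient'' (the monodromy of $F\to\bar F$ factoring through $\bar i_*$ followed by $G^n\to G$) is exactly the commutativity of the relevant square in the paper's Diagram 4, and your identification $L(S,n-1)=L^{orb}=\ker(\bar i_*)$ is the paper's Corollary \ref{identify}. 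Up to this point the proposal is correct and follows the same route.

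The gap is in the converse half of the ``Furthermore'' clause. To show that a non-free action forces $L(S,n-1)\neq\l 1\r$, you construct a candidate element $[\ell,a]\in\ker(i_*)$ and then concede that verifying $i_*([\ell,a])=1$ ``rests on an honest analysis of the point-pushing subgroup \ldots and the relations a cone point contributes,'' which you do not carry out; as written this direction is not proved. Moreover the detour is unnecessary: you have already established $L(S,n-1)\simeq K(\bar S,n-1)$, and the very statement of Theorem \ref{skr-jf} that you import includes the clause that $K(O,n-1)=\l 1\r$ if and only if $O$ has no singular point. Since $\bar S$ has a singular point exactly when the action is not free, the converse is immediate --- this is precisely the one-line argument the paper uses. (A second, minor, inversion: you present Corollary \ref{free} as following from the nontriviality of $L$, whereas it actually follows from the identification $K(\bar S,n-1)\simeq L(S,n-1)\leq\pi_1(F)$ together with the freeness of $\pi_1(F)$ as the fundamental group of a non-closed surface.)
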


Since we expect that (\cite{Rou24-2}, Asphericity conjecture) 
${\cal O}_n(S, G)$ is aspherical, therefore $\pi_1({\cal O}_n(S, G), *_n)$
should be torsion free.

Hence, we can state the following weaker conjecture.

\medskip
{\noindent}
{\bf Torsion-Free conjecture.} {\it Let $S$, $G$ and $F$ be as in the statement
of Theorem \ref{mt}. Then $\pi_1({\cal O}_n(S, G), *_n)$ is torsion
free and consequently, $\pi_1(F, *_n)/L(S,
n-1)\simeq {\text {ker}}(p_*)$ is torsion free.}
\medskip

\begin{rem}\label{asphericityC}{\rm Recall that the orbifold
    fundamental group  $\pi_1^{orb}({\cal O}_n(S/G))$
    of the configuration space ${\cal O}_n(S/G)$
    of the orbifold $S/G$ is called the {\it orbifold pure braid group} (\cite{Rou24-1}, Definition 2.1) of
    $S/G$, and $\pi_1({\cal O}_n(S,G))$ is isomorphic to a subgroup of 
    $\pi_1^{orb}({\cal O}_n(S/G))$ (see Example \ref{cov-map-example}). Although, $\pi_1^{orb}({\cal
      O}_n(S/G))$ may have torsion element, we expect in the above
    conjecture that $\pi_1({\cal O}_n(S,G))$ is torsion free.
    In [\cite{Rou24-2}, Theorem 1.2] we verified the Asphericity 
conjecture when the underlying space of $S/G$ is ${\Bbb C}$ with one cone point of order $\geq
2$, or with two cone points of order $2$ each, or the underlying space
of $S/G$ is ${\Bbb C}-\{0\}$ with one cone point of order
$2$. These three cases are related to the well-known $K(\pi, 1)$-problem for Artin
groups. See \cite{Rou24-2} for more details.}\end{rem}

\section{Orbifold pure braid groups and the snake lemma}
We recall that in [\cite{Rou20}, Lemma 2.9] we had constructed
homomorphisms between configuration Lie groupoids
(\cite{Rou20}, Definition 2.8) of a large class of Lie
groupoids. The orbit configuration space ${\cal O}_n(X,G)$ was the object space of the
configuration Lie groupoid, defined in [\cite{Rou20}, Definition 2.6],
of the translation Lie groupoid, corresponding to the
action of $G$ on $X$ (\cite{Rou20}, Example 2.2).
And $p$ was the object level map of this homomorphism. 
This homomorphism induced map on their
classifying spaces are not quasifibrations (in the sense of \cite{DT58}), if the action
has a fixed point (see Proposition \ref{finallemma}). However,
this homomorphism induces a four-term exact sequence of 
fundamental groups when $X$ is a $2$-manifold.

Let ${\cal C}$ be the class of
  all $2$-dimensional orbifolds $O$ with only cone points and satisfy the following
  conditions.

  $\bullet$ The underlying space of $O$ is the $2$-sphere with at least one puncture.

  $\bullet$ If $O$ has genus $\geq 1$ then it is orientable. 

  We now recall the main ingredient of the proof of Theorem \ref{mt}.
  For $O\in {\cal C}$, the
projection to the first $n-1$ coordinates $\bar p:{\cal O}_n(O)\to {\cal
  O}_{n-1}(O)$, induces a four-term exact sequence of their orbifold
fundamental groups.

    \begin{thm}\label{skr-jf}
      (\cite{Rou20},\cite{JF23},\cite{Rou24-1})
      Let $O\in {\cal C}$ and $\bar F={\bar p}^{-1}(\bar *_{n-1})$. Then there is the following
      exact sequence.
      
  \centerline{
  \xymatrix@C-=0.4cm{
    1\ar[r]&K(O, n-1)\ar[r]&\pi_1^{orb}(\bar F, \bar *_n)\ar[r]^{\!\!\!\!\!\!\!\!\!\! \bar i_*}&
    \pi_1^{orb}({\cal O}_n(O), \bar *_n)\ar[r]^{\!\!\!\!\!\! {\bar p_*}}&
    \pi_1^{orb}({\cal O}_{n-1}(O), \bar *_{n-1})\ar[r]&1.}}
Furthermore, $K(O,n-1)=\l 1\r$ if and only if $O$ has no singular point.
\end{thm}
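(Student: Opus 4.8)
The plan is to split into the non-singular and singular cases and, in both, to reduce the statement to understanding $\ker\bar{i}_*$. Suppose first that $O$ has no singular point. Then $O$ is an honest surface, which by the genus hypotheses is neither $\mathbb{S}^2$ nor $\mathbb{RP}^2$, hence aspherical, and $\mathcal O_n(O)$ is the ordinary ordered configuration space with $\pi_1^{orb}=\pi_1$. Theorem \ref{FN} makes $\bar p$ a locally trivial fibration with fibre $F=O-\{(n-1)\text{ points}\}$; iterating it shows each $\mathcal O_j(O)$ is aspherical, so $\pi_2(\mathcal O_{n-1}(O))=\langle1\rangle$ and the homotopy long exact sequence of $\bar p$ collapses to the short exact sequence $1\to\pi_1(F)\to\pi_1(\mathcal O_n(O))\to\pi_1(\mathcal O_{n-1}(O))\to1$ — equivalently the case $G=\langle1\rangle$ of Corollary \ref{corollary}. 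So the four-term sequence holds with $K(O,n-1)=\langle1\rangle$; this also gives the ``if'' half of the final assertion. From now on $O$ carries cone points $z_1,\dots,z_k$ of orders $m_1,\dots,m_k\ge2$ (and $n\ge2$), and the task is to build the four-term sequence and to show $K(O,n-1)\neq\langle1\rangle$.

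Here $\bar p$ is still a locally trivial (orbifold) fibration over the open sub-orbifold of configurations none of whose coordinates sits at a cone point, but the homeomorphism type of the fibre jumps whenever a coordinate collides with some $z_j$ (that cone point is then absorbed into the removed locus and leaves the fibre), so $\bar p$ is not a fibration, nor even a quasifibration, globally, and the long exact homotopy sequence is unavailable. The way I would replace it is through a presentation of the orbifold braid groups $\pi_1^{orb}(\mathcal O_j(O))$, obtained by induction on $j$: a surface-braid-group-type presentation in which, for each strand $1\le r\le j$ and each cone point $z_\ell$, one adjoins a generator $\rho_{r,\ell}$ for ``strand $r$ encircling $z_\ell$'', subject to the usual (pure) braid relations together with relations that rewrite each $\rho_{r,\ell}^{m_\ell}$ in terms of the braiding — these powers are \emph{not} simply trivial, and the presentation is more delicate than a first guess (for instance, merely imposing $\mu^{m}=1$ on the meridians in the fundamental group of the regular part already gives the wrong group, as one checks for $O=\mathbb C$ with a single cone point and $n=2$). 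Granting such presentations, one reads off that $\bar p_*$ is onto (adjoin an $n$-th strand parked near a puncture, or carried once around a handle, of $O$ — possible exactly because $O\in\mathcal C$), that $\ker\bar p_*=\operatorname{im}\bar i_*$ (the generators killed by $\bar p_*$ are precisely those involving the $n$-th strand, i.e. those in the image of $\bar i_*=(\text{fibre inclusion})_*$), and that $K(O,n-1)=\ker\bar i_*$ is generated by explicit words in $\pi_1^{orb}(F)$ produced by the interaction of the cone-point loops with the braiding of the first $n-1$ strands; in particular $K(O,n-1)$ is non-trivial as soon as some $z_j$ is present.

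The hard part is this last analysis: establishing the correct presentation of $\pi_1^{orb}(\mathcal O_n(O))$ and extracting $K(O,n-1)$ from it. This is exactly the content of \cite{Rou20}, with the corrections and completions supplied by \cite{JF23} and \cite{Rou24-1} — which is why three references appear. Once $K(O,n-1)$ is described explicitly, the remaining point is formal: it is trivial precisely when there are no cone points, so that, together with the non-singular case, $K(O,n-1)=\langle1\rangle$ if and only if $O$ has no singular point. (That $K(O,n-1)$ is moreover a free group — Corollary \ref{free} — is a further consequence, obtained here via Theorem \ref{mt}, which identifies $K(O,n-1)$ with a subgroup $L(S,n-1)$ of the free group $\pi_1(F)$.)
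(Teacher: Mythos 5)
The first thing to note is that the paper supplies no proof of this statement at all: Theorem \ref{skr-jf} is recalled verbatim as an external result, with the three citations \cite{Rou20}, \cite{JF23}, \cite{Rou24-1} carrying the entire burden, and the present paper only uses it as input (via Diagram 3 and the Snake Lemma) to prove Theorem \ref{mt}. Your proposal is therefore not comparable to a proof in the paper, but it can be assessed on its own terms. The non-singular case is correct and self-contained: if $O$ has no cone points then $O\in\mathcal{C}$ forces $O\neq\mathbb{S}^2,\mathbb{RP}^2$, the Fadell--Neuwirth fibration applies, asphericity collapses the long exact sequence, and the four-term sequence degenerates to the short exact one with $K(O,n-1)=\langle 1\rangle$. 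This also correctly delivers the ``if'' half of the last assertion.

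For the singular case, however, what you have written is a roadmap rather than a proof: the two essential claims --- that $\pi_1^{orb}(\mathcal{O}_n(O))$ admits the asserted presentation, and that $K(O,n-1)=\ker\bar i_*$ is nontrivial (generated by explicit words coming from the cone-point loops) whenever a cone point is present --- are precisely the content you defer to \cite{Rou20}, \cite{JF23}, \cite{Rou24-1}. Since the theorem is attributed to those references, this deferral is defensible and mirrors what the paper itself does; but be aware that you have not actually proved the ``only if'' direction or the exactness at $\pi_1^{orb}(F,\bar *_n)$, which is where the whole difficulty lies (and where the hypotheses defining $\mathcal{C}$ are genuinely used). Two smaller points: surjectivity of $\bar p_*$ does not need $O\in\mathcal{C}$ --- adding an $n$-th strand supported near the basepoint configuration works for essentially any surface orbifold; the role of the class $\mathcal{C}$ is in the middle exactness, so your sketch slightly misattributes where the hypothesis enters. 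And your parenthetical derivation of Corollary \ref{free} tacitly uses that every $O\in\mathcal{C}$ is developable, i.e.\ arises as some $S/G$ as in Theorem \ref{mt}; that is true here (the bad $2$-orbifolds are excluded by the defining conditions of $\mathcal{C}$), but it should be said, since the paper's own deduction of Corollary \ref{free} from Theorem \ref{mt} and Corollary \ref{identify} needs the same fact.
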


From the proof of Theorem \ref{mt} we will deduce the following.

\begin{cor}\label{identify} Let $S$ and $G$ be as in the statement of Theorem \ref{mt},
  and $O=S/G$, then $L(S, n-1)\simeq K(O, n-1)$.\end{cor}

The following consequence of Theorem \ref{mt} and Corollary
\ref{identify} is new, that is, it does not follow from
\cite{Rou20},\cite{JF23} and \cite{Rou24-1}, where $K(O, n-1)$ was
only shown to be non-trivial when $O$ has a singular point.

\begin{cor}\label{free}
For $O\in {\cal C}$ and $n\geq 2$, $K(O, n-1)$ is free.
\end{cor}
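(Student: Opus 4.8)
The plan is to transfer the statement, through Corollary \ref{identify}, to the freeness of $L(S,n-1)$ for a manifold-with-action realizing $O$, and then to recognize $L(S,n-1)$ as a subgroup of the fundamental group of an open surface, which is free. First I would realize every $O\in{\cal C}$ as an orbit space $S/G$. Since $O$ has only cone points and is either of genus zero with at least one puncture (hence non-compact) or orientable of genus $\geq 1$, $O$ is a good (developable) $2$-orbifold: every bad $2$-orbifold is closed, and none of them is non-compact or orientable of genus $\geq 1$. So I may take $S=\wt O$, the orbifold universal cover --- a connected, simply connected $2$-manifold without boundary --- and let $G=\pi_1^{orb}(O)$ act on $S$ as the group of deck transformations; this action is effective and properly discontinuous and $S/G=O$. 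Furthermore, orbifolds in ${\cal C}$ are orientable (the underlying surface is orientable and there are only cone points), so $G$ acts by orientation-preserving diffeomorphisms; any element of $G$ with a fixed point generates the finite isotropy group there and is therefore a finite-order orientation-preserving diffeomorphism of a surface, which has only isolated fixed points. Hence the action has isolated fixed points. Finally $\wt O={\Bbb S}^2$ only when $O$ is a closed spherical orbifold, whose underlying surface is ${\Bbb S}^2$ --- impossible for $O\in{\cal C}$ --- and $\wt O\neq{\Bbb {RP}}^2$ since $\wt O$ is simply connected. Thus $(S,G)$ satisfies every hypothesis of Theorem \ref{mt}.

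With $(S,G)$ in hand, I would apply Theorem \ref{mt} and Corollary \ref{identify}. Theorem \ref{mt} produces the exact sequence
\[
1\to L(S,n-1)\to\pi_1(F,*_n)\to\pi_1({\cal O}_n(S,G),*_n)\to\pi_1({\cal O}_{n-1}(S,G),*_{n-1})\to 1,
\]
where $F=p^{-1}(*_{n-1})=S-\{(n-1)\ \text{orbits}\}$; exactness at the left exhibits $L(S,n-1)$ as a subgroup of $\pi_1(F,*_n)$. Corollary \ref{identify} identifies $L(S,n-1)\simeq K(S/G,n-1)=K(O,n-1)$. Now, since $n\geq 2$, the space $F$ is obtained from the connected $2$-manifold $S$ by deleting a non-empty closed discrete subset (at least $n-1\geq 1$ points), so $F$ is a connected, non-compact surface without boundary; such a surface is homotopy equivalent to a graph, whence $\pi_1(F,*_n)$ is a free group. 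Therefore $K(O,n-1)\simeq L(S,n-1)$ is a subgroup of a free group and is free by the Nielsen--Schreier theorem, which is the claim.

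The step that needs real care is the realization in the first paragraph: one must secure, at once, effectiveness, proper discontinuity, isolated fixed points, $S\neq{\Bbb S}^2,{\Bbb {RP}}^2$, and $S/G$ in the range of Theorem \ref{mt}. This rests on the developability of $2$-orbifolds with only cone points that are non-compact or of positive genus, and on the (standard but indispensable) observation that for an orientable orbifold the deck group acts orientation-preservingly, so that the local isotropy actions are rotations with isolated fixed points. Everything afterwards is formal: Corollary \ref{identify} supplies the bridge between $K(O,n-1)$ and a manifold-theoretic group, and the only topological input is that the fundamental group of a non-compact surface is free.
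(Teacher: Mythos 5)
Your argument is correct and is essentially the route the paper intends: the paper derives Corollary \ref{free} as a consequence of Theorem \ref{mt} and Corollary \ref{identify}, i.e.\ by identifying $K(O,n-1)$ with $L(S,n-1)$, a subgroup of the free group $\pi_1(F,*_n)$ of the open surface $F$. Your careful realization of $O\in{\cal C}$ as $\wt O/\pi_1^{orb}(O)$ and verification of the hypotheses of Theorem \ref{mt} simply makes explicit what the paper leaves implicit.
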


Now we show how the proof of Theorem \ref{skr-jf} follows from
\cite{Rou20},\cite{JF23} and \cite{Rou24-1}.

\begin{proof}[Proof of Theorem \ref{skr-jf}]
  We divide the proof in three cases.

  \medskip
  \noindent
  {\bf Case 1.} Assume $O$ has no singular point, that is, it is a
  smooth $2$-manifold which is either the $2$-sphere with at least one
  puncture or a surface of genus $\geq 1$. Then, $\cal{O}_1(O)(=O)$ and 
  $O-\{\text{finitely many points}\}$ are both aspherical, that is
  all the higher homotopy groups are trivial. Next, by the
  Fadell-Neuwirth fibration theorem (Theorem \ref{FN}) $p:{\cal
    O}_n(O)\to {\cal O}_{n-1}(O)$ is a fibration with
  fiber $O-\{(n-1)\ \text{points}\}$. Therefore, by applying the
  long exact homotopy sequence induced by $p$ and an induction
  argument on $n$, we
  deduce that all the ${\cal O}_n(O)$ are aspherical. Hence we get an 
  exact sequence as in the statement of the theorem, with $K(O,
  n-1)=\l 1\r$.

  \medskip
  \noindent
  {\bf Case 2.} Let the underlying space of $O$ be a $2$-sphere with
  at least one puncture and with at least one cone 
  point. Therefore, $O$ is the complex plane with some punctures and
  at least one cone point. In this case, in [\cite{Rou20}, Theorem 2.14] we proved
  the theorem, which was then corrected in [\cite{JF23},
  Theorem C] by showing that in fact
  $K(O, n-1)\neq \l 1\r$ whenever $O$ has at least one cone point.

  \medskip
  \noindent
  {\bf Case 3.} If $O$ is orientable and has genus $\geq 1$, the
  theorem was proved in [\cite{Rou24-1}, Theorem 2.2]. Using Case 2
  above 
  and [\cite{Rou24-1}, Lemma 3.1],
  we also proved there that whenever $O$ has at least one cone point,
  $K(O, n-1)\neq \l 1\r$.

  The above arguments also show that if $K(O, n-1)= \l 1\r$, then
  $O$ can not have any singular point. 

  This completes the proof of the theorem.
\end{proof}
For the proof of Theorem \ref{mt}, together with Theorem \ref{skr-jf},
we also need the following general version
of the Snake Lemma.

\begin{lemma}\label{nasl} {\rm (Snake Lemma)}
  Consider the following commutative diagram of discrete groups with exact rows and columns.

\centerline{
    \xymatrix{
      &G_1\ar[r]\ar[d]^v&G_2\ar[d]^w\ar[r]^s&
    G_3\ar[r]\ar[d]^r&1\\
    1\ar[r]&H_1\ar[r]\ar[d]^u&H_2\ar[r]\ar[d]
    &H_3\ar[d]&\\
    &K_1\ar[r]^t\ar[d]&K_2\ar[r]\ar[d]&K_3\ar[d]&\\
    &1&1&1&}}

\centerline {{\rm Diagram 1}}

Then, there is a connecting homomorphism $\Delta:{\text {ker}}(r)\to K_1$ which makes the following
sequence exact.

\centerline{
  \xymatrix{{\text {ker}}(v)\ar[r]&{\text {ker}}(w)\ar[r]^{s|_{{\text
          {ker}}(w)}}&{\text {ker}}(r)\ar[r]^{\hspace{2mm}\Delta}&K_1\ar[r]^t&K_2\ar[r]&K_3.}}

\noindent
Consequently, if $t$ is injective, then $s|_{{\text {ker}}(w)}:{\text
  {ker}}(w)\to {\text {ker}}(r)$ is surjective. Also, if
$G_1\to G_2$ is injective, then so is ${\text {ker}}(v)\to {\text {ker}}(w)$.\end{lemma}

\begin{proof} The proof is a simple diagram chase. The lemma is well-known when the groups are 
  abelian. See [\cite{Jac91}, Ex. 1, p. 337]. The proof in the general
  case follows the same line of
  the proof of the abelian case. 
  The only difference to note here is that in the case of abelian
  groups the cokernels $K_i$, for $i=1,2,3$, 
  always exist, but in the non-abelian case we need this as an assumption.

  We give the definition of the connecting homomorphism and then leave the rest of the
  checking to the reader.

  Let $\alpha\in {\text {ker}}(r)$. Since $s$ is surjective
  there is $\beta\in G_2$ such that $s(\beta)=\alpha$. Clearly, using
  exactness at $H_1$, $w(\beta)\in H_1$.
  Define $\Delta(\alpha)=u(w(\beta))$. For well-definedness of $\Delta$, let
  $\beta'\in G_2$, such that $s(\beta')=\alpha$. Using exactness at $G_2$ and $H_1$, we see that
  $w(\beta^{-1}\beta')$ is the image of an element of $G_1$, and hence
  lies in the kernel of $u$. Therefore, $u(w(\beta))=u(w(\beta'))$.\end{proof}

\section{Proofs}
\begin{proof}[Proof of Theorem \ref{mt}] For $x\in S$, by $\bar x\in S/G$ we
  denote its orbit.
If $*_k=(x_1,x_2,\cdots, x_k)$, then $$F=p^{-1}(*_{n-1})=\{(x_1,x_2,\cdots,
x_{n-1},x)\in {\cal O}_n(S,G)\}.$$ 
  Therefore, since $F$ is homeomorphic to $S$ minus the union of the
  orbits $\bar x_i$, $i=1,2,\cdots, n-1$, 
  the action of $G$ induces an action on $F$.
  Let $\bar F=F/G$ and $\bar S=S/G$. 
  Note that, for all $k$, there is an induced action of $G^k$ on
  ${\cal O}_k(S,G)$. Then, the configuration space
  ${\cal O}_k(\bar S)$ (:=${\cal O}_k(\bar S, \l 1\r))$ is equal
  to ${\cal O}_k(S,G)/G^k$ as a subset of $\bar S^k$. Also if $q:G^n\to G^{n-1}$ is the
  projection to the first $n-1$ coordinates, then clearly $p$ is $q$-equivariant.
  Hence $p$ induces a map $\bar p:{\cal O}_n(\bar S)\to {\cal O}_{n-1}(\bar S)$.
  Similar statements are true for the inclusion map $i:F\to {\cal
    O}_n(S, G)$, since it is $s$-equivariant, where $s:G\to G^n$ is
  defined by $s(g)=(e,e,\cdots,e,g)$, $e\in G$ is the identity element. 
   One easily sees now that $i$ induces an inclusion map $\bar i:\bar F\to {\cal O}_n(\bar
  S)$, where $\bar F=\{(\bar x_1,\bar x_2,\cdots,
\bar x_{n-1},\bar x)\in {\cal O}_n(\bar S)\}$. Since for all $k$, $G^k$ also
   acts effectively and properly discontinuously on
   ${\cal O}_k(S,G)$, ${\cal O}_k(\bar S)$ is an orbifold. Hence we have orbifold
   covering maps $q:F\to \bar F$ and $q_k:{\cal O}_k(S, G)\to {\cal O}_k(\bar S)$, for
   all $k$.
   
 \medskip
  \centerline{
  \xymatrix{F\ar[r]^{\!\!\!\!\!\!\!\!\!\!\!\!\!\! i}\ar[d]^q&
    {\cal O}_n(S, G)\ar[d]^{q_n}\ar[r]^{\!\!\!\!\!\! p}&
    {\cal O}_{n-1}(S, G)\ar[d]\\
    \bar F\ar[r]^{\!\!\!\!\!\!\!\!\!\!\!\!\!\! \bar i}&
    {\cal O}_n(\bar S)\ar[r]^{\!\!\!\!\!\! {\bar p}}&
    {\cal O}_{n-1}(\bar S)}}

\centerline{Diagram 2}

   Then, $\bar i$ and $\bar p$ are maps of orbifolds (see Example
   \ref{map-example}).
   Hence, we have Diagram 2 
   in the category of orbifolds with the squares in the diagram commutative.

   Now, we apply the orbifold fundamental group $\pi_1^{orb}(-)$
functor on Diagram 2 and get Diagram 3,
with commutative squares.

Since the spaces on the top sequence are
manifolds,
these groups reduce to the ordinary fundamental groups. Here
$L(S, n-1)$ is the kernel of $i_*$.

   \medskip
  \centerline{
    \xymatrix@C-=0.35cm{1\ar[r]&L(S,n-1)\ar[r]\ar[d]
      &\pi_1(F, *_n)\ar@{..>}[r]^{\!\!\!\!\!\!\!\!\!\!\!\!\!\! i_*}\ar[d]^{q_*}&
    \pi_1({\cal O}_n(S, G), *_n)\ar[d]^{q_{n*}}\ar@{..>}[r]^{\!\!\!\!\!\! p_*}&
    \pi_1({\cal O}_{n-1}(S, G),*_{n-1})\ar@{..>}[r]\ar[d]&1\\
    1\ar[r]&K(\bar S, n-1)\ar[r]&\pi_1^{orb}(\bar F, \bar *_n)\ar[r]^{\!\!\!\!\!\!\!\!\!\! \bar i_*}&
    \pi_1^{orb}({\cal O}_n(\bar S), \bar *_n)\ar[r]^{\!\!\!\!\!\! {\bar p_*}}&
    \pi_1^{orb}({\cal O}_{n-1}(\bar S), \bar *_{n-1})\ar[r]&1}}

\centerline{Diagram 3}
\medskip
    
Note that, $\bar S$ is a $2$-dimensional orbifold, with isolated singular
points and hence they are all cone points (\cite{Sco83}, \S2). Furthermore, the
conditions on $S$ imply that $\bar S\in {\cal C}$. 
Hence, the exactness of the bottom sequence in Diagram 3   
 follows from Theorem \ref{skr-jf}. We have to show that the dotted
 part of the top
sequence in Diagram 3 is exact.

Next, we extend Diagram 3 to Diagram 4, where all the squares are commutative.

\centerline{
  \xymatrix@C-=0.35cm{&1\ar[d]&1\ar[d]&1\ar[d]&1\ar[d]&\\
    1\ar[r]&L(S,n-1)\ar[r]\ar[d]&\pi_1(F, *_n)\ar@{..>}[r]^{\!\!\!\!\!\!\!\!\!\!\!\!\!\! i_*}\ar[d]^{q_*}&
    \pi_1({\cal O}_n(S, G), *_n)\ar[d]^{q_{n*}}\ar@{..>}[r]^{\!\!\!\!\!\! p_*}&
    \pi_1({\cal O}_{n-1}(S, G),*_{n-1})\ar[d]\ar@{..>}[r]&1\\
    1\ar[r]&K(\bar S, n-1)\ar[r]\ar[d]&\pi_1^{orb}(\bar F, \bar *_n)\ar[r]^{\!\!\!\!\!\!\!\!\!\! \bar i_*}\ar[d]^r&
    \pi_1^{orb}({\cal O}_n(\bar S), \bar *_n)\ar[r]^{\!\!\!\!\!\! {\bar p_*}}\ar[d]^{r_n}
    &\pi_1^{orb}({\cal O}_{n-1}(\bar S), \bar *_{n-1})\ar[r]\ar[d]&1\\
    1\ar[r]&1\ar[r]&G\ar[r]^s\ar[d]&G^n\ar[r]^q\ar[d]&G^{n-1}\ar[r]\ar[d]&1\\
    &&1&1&1&}}

\centerline{Diagram 4}

The last three vertical sequences are exact by orbifold
covering space theory (see Example \ref{cov-map-example}).
See [\cite{Che01}, (4) in Proposition 2.2.3] for a more general statement.

  We now prove that the map 
  $L(S, n-1)\to K(\bar S, n-1)$ is an isomorphism.

  Note that this
homomorphism is the restriction of $q_*$, and hence is injective,
since $q_*$ is injective. To show that the image lies
in $K(\bar S, n-1)$, let $\alpha\in L(S, n-1)$, then since $L(S, n-1)$
is the kernel of $i_*$, $i_*(\alpha)=0$. Hence $q_{n*}(
i_*(\alpha))=\bar i_*(q_*(\alpha))=0$. Therefore, $q_*(\alpha)\in
K(\bar S,n-1)$.

Next, we show that $q_*:L(S, n-1)\to K(\bar S, n-1)$ is surjective.
Let $\bar\alpha\in K(\bar S, n-1)$. Then,
$\bar i_*(\bar\alpha)=0$ and hence $r_n(\bar
i_*(\bar\alpha))=s(r(\bar\alpha))=0$. Since $s$ is injective,
$r(\bar\alpha)=0$. Hence $\bar\alpha=q_*(\alpha)$ for some
$\alpha\in\pi_1(F, *_n)$. Consequently, $q_{n*}(i_*(\alpha))=\bar
i_*(q_*(\alpha))=0$. Since $q_{n*}$ is injective, this shows that
$i_*(\alpha)=0$. Hence $\alpha\in L(S,n-1)$. This shows that the
restriction $q_*:L(S,n-1)\to K(\bar S, n-1)$ is surjective.

Now to show that the top horizontal sequence in Diagram 3
is exact, we consider the following part of Diagram 4.

\centerline{
  \xymatrix{
    &\pi_1^{orb}(\bar F, \bar *_n)\ar[r]^{\!\!\!\! \bar i_*}\ar[d]&
    \pi_1^{orb}({\cal O}_n(\bar S), \bar *_n)\ar[r]^{\!\!\!\!\!\! {\bar p_*}}\ar[d]
    &\pi_1^{orb}({\cal O}_{n-1}(\bar S), \bar *_{n-1})\ar[r]\ar[d]&1\\
    1\ar[r]&G\ar[r]^s\ar[d]&G^n\ar[r]^q\ar[d]&G^{n-1}\ar[d]&\\
    &1\ar[r]\ar[d]&1\ar[r]\ar[d]&1\ar[d]&\\
  &1&1&1&}}

\centerline{Diagram 5}

Now, the exactness of the top sequence of
Diagram 4 follows immediately from Lemma \ref{nasl},
  by restricting it to the case of $K_1=K_2=K_3=\l 1\r$ and Diagram 5.

   When the action is also free, by Corollary \ref{corollary} 
  $i_*$ is injective, which implies $L(S, n-1)=\l 1\r$.

Conversely, if $L(S, n-1)=\l 1\r$, then by
Theorem \ref{skr-jf}, $\bar S$ has no singular point, and hence the
action of $G$ on $S$ is free.

This completes the proof of Theorem \ref{mt}.
\end{proof}

 \begin{proof}[Proof of Corollary \ref{identify}]
The proof of $L(S, n-1)\simeq K(\bar S, n-1)$ is deduced in the proof
of  Theorem \ref{mt}.\end{proof}

Next, we mention an extension of Theorem \ref{mt}.
Let $p_l:{\cal O}_n(S, G)\to {\cal O}_l(S, G)$ be the projection 
to the first $l$ coordinates, for some $1\leq l\leq n-1$ and $p_l(*_n)=*_l=(x_1,x_2,\ldots , x_l)$. Then,
$p_l^{-1}(*_l)$ is homeomorphic to ${\cal O}_{n-l}(S_l, G)$,
where $S_l=S-\cup_{j=1}^lGx_j$.
    
    \begin{thm}\label{gen}Theorem \ref{mt} is also
      valid for $p_l$ with $F$ replaced by $p_l^{-1}(*_l)$.\end{thm}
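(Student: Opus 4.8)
The plan is to reduce the statement about $p_l$ to the case $l = n-1$ already settled in Theorem \ref{mt}, by factoring $p_l$ as a composite of one-step projections and chasing the resulting diagram. First I would write $p_l = p_{l+1,l}\circ p_{l+2,l+1}\circ\cdots\circ p_{n,n-1}$, where each $p_{k,k-1}:{\cal O}_k(S,G)\to{\cal O}_{k-1}(S,G)$ drops the last coordinate. Equivalently, and more in the spirit of the statement, one fixes the first $l$ coordinates $*_l=(x_1,\ldots,x_l)$ and observes that $p_l^{-1}(*_l)$ is homeomorphic to ${\cal O}_{n-l}(S_l,G)$ with $S_l=S-\bigcup_{j=1}^l Gx_j$. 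The group $G$ acts on $S_l$ effectively and properly discontinuously, with the same isolated fixed points as on $S$ (removing $G$-orbits of the $x_j$, each of which has trivial isotropy since $*_n$ was chosen with trivial isotropy, does not create or destroy fixed points). So $S_l$ is again a connected $2$-manifold without boundary with an action of $G$ of the required type.

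The one genuine point to check is that $S_l/G$ still lies in the class ${\cal C}$, i.e. that it is either of genus zero with at least one puncture, or of genus $\geq 1$ and orientable. Since $S_l$ is obtained from $S$ by deleting finitely many $G$-orbits of points with trivial isotropy, $S_l/G$ is obtained from $S/G$ by deleting finitely many nonsingular points, i.e. adding finitely many punctures. Removing points does not change the genus of the underlying surface and does not affect orientability; and adding punctures to a genus-zero surface leaves it with at least one puncture. Hence $S_l/G\in{\cal C}$ whenever $S/G\in{\cal C}$, and the hypotheses of Theorem \ref{mt} are satisfied for the pair $(S_l,G)$ with $n$ replaced by $n-l$. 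Applying Theorem \ref{mt} to ${\cal O}_{n-l}(S_l,G)$ and its one-step projection ${\cal O}_{n-l}(S_l,G)\to{\cal O}_{n-l-1}(S_l,G)$ gives the desired four-term exact sequence, with $L(S_l,n-l-1)$ trivial exactly when $G$ acts freely on $S_l$, equivalently (by the same nonsingularity argument applied in reverse, using Theorem \ref{skr-jf} and Corollary \ref{identify}) exactly when $G$ acts freely on $S$.

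The only thing left is to match up the fiber: one must verify that ${\bar p}_l^{-1}(\text{fiber over }*_l)$ inside ${\cal O}_n(S_l/G\text{-data})$ really is the fiber $p_l^{-1}(*_l)=S_l-\{(n-l-1)\text{ orbits}\}$ appearing in Theorem \ref{mt} for $(S_l,G)$ — but this is immediate from the homeomorphism $p_l^{-1}(*_l)\cong{\cal O}_{n-l}(S_l,G)$ together with the Fadell--Neuwirth-type identification of the fiber of a one-step projection. I expect the main (and only real) obstacle to be the bookkeeping in the previous paragraph: confirming that passing from $S$ to $S_l$ preserves \emph{all} the hypotheses of Theorem \ref{mt} simultaneously — connectedness, the exclusion of ${\Bbb S}^2$ and ${\Bbb{RP}}^2$, isolated fixed points, and membership of the quotient orbifold in ${\cal C}$. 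Connectedness and the exclusion of the sphere and projective plane hold because deleting at least one point from a connected surface other than ${\Bbb S}^2$ or ${\Bbb{RP}}^2$, or indeed from those, always yields a connected surface that is not closed (so in particular not ${\Bbb S}^2$ or ${\Bbb{RP}}^2$); and for $n-l\geq 2$ we are deleting at least one orbit. Once these points are in place, Theorem \ref{gen} follows with no further work. $\qed$
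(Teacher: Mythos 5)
There is a genuine gap here: the sequence you produce is not the one Theorem \ref{gen} asserts. Applying Theorem \ref{mt} to the pair $(S_l,G)$ with $n$ replaced by $n-l$ yields a four-term exact sequence in which $\pi_1({\cal O}_{n-l}(S_l,G))$ occupies the \emph{middle} slot, as the total space of the one-step projection ${\cal O}_{n-l}(S_l,G)\to{\cal O}_{n-l-1}(S_l,G)$, with fiber term $\pi_1(S_l-\{(n-l-1)\ \text{orbits}\})$, base term $\pi_1({\cal O}_{n-l-1}(S_l,G))$ and kernel term $L(S_l,n-l-1)$. Theorem \ref{gen}, by contrast, asserts the sequence $1\to L(S,l)\to\pi_1({\cal O}_{n-l}(S_l,G))\to\pi_1({\cal O}_n(S,G))\to\pi_1({\cal O}_l(S,G))\to 1$, in which $\pi_1({\cal O}_{n-l}(S_l,G))\simeq\pi_1(p_l^{-1}(*_l))$ is the \emph{fiber} term of the multi-step projection $p_l$ and the ambient groups $\pi_1({\cal O}_n(S,G))$ and $\pi_1({\cal O}_l(S,G))$ appear. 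Your bookkeeping that $(S_l,G)$ again satisfies all the hypotheses of Theorem \ref{mt} (connectedness, isolated fixed points, $S_l/G\in{\cal C}$) is correct, but it establishes a different, logically independent statement. The factorization of $p_l$ into one-step projections that you mention at the outset is not ``equivalent'' to this and is never carried out; making it work would require splicing several four-term sequences and identifying the resulting leftmost kernel term with $L(S,l)\simeq K(\bar S,l)$, which is not automatic for sequences that are not short exact.

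The paper instead re-runs the proof of Theorem \ref{mt} verbatim for $p_l$: it builds the analogue of Diagram 4 (Diagram 6), whose columns are the orbifold covering space sequences with deck groups $G^{n-l}$, $G^n$, $G^l$, and whose middle row is the four-term exact sequence of orbifold braid groups for the projection $\bar p_l:{\cal O}_n(\bar S)\to{\cal O}_l(\bar S)$ to the first $l$ coordinates. The exactness of that row for a multi-step projection is a separate external input, quoted from [\cite{Rou24-1}, Remark 3.2], and is not a consequence of Theorem \ref{skr-jf} alone; the Snake Lemma (Lemma \ref{nasl}) is then applied exactly as before. That input and the ensuing diagram chase are what your proposal is missing.
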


\begin{proof} Note that, Diagram 6 is also valid since 
  the middle horizontal sequence in the diagram is exact (\cite{Rou24-1}, Remark 3.2).

Here $\bar{S_l}=S_l/G$, $q_l:G^n\to G^l$ is
the projection to the first $l$-coordinates, $*^{n-l}=(x_{l+1}, x_{l+2},\ldots, x_n)$ and
$j:{\cal O}_{n-l}(S_l, G)\to {\cal O}_n(S, G)$
is defined by $$j(y_1,y_2,\ldots , y_{n-l})=(x_1,x_2,\ldots , x_l, y_1, y_2,\ldots , y_{n-l}).$$

\centerline{
  \xymatrix@C-=0.35cm{&1\ar[d]&1\ar[d]&1\ar[d]&1\ar[d]&\\
    1\ar[r]&L(S, l)\ar[r]\ar[d]&\pi_1({\cal O}_{n-l}(S_l, G), *^{n-l})\ar@{..>}[r]^{\hspace{3mm}j_*}\ar[d]&
    \pi_1({\cal O}_n(S, G), *_n)\ar[d]\ar@{..>}[r]^{{p_l}_*}&
    \pi_1({\cal O}_l(S, G),*_l)\ar[d]\ar@{..>}[r]&1\\
    1\ar[r]&K(\bar S, l)\ar[r]\ar[d]&\pi_1^{orb}({\cal O}_{n-l}(\bar{S_l}), \bar *^{n-l})\ar[r]^{\hspace{3mm}\bar j_*}\ar[d]&
    \pi_1^{orb}({\cal O}_n(\bar S), \bar *_n)\ar[r]^{{\bar {p_l}_*}}\ar[d]
    &\pi_1^{orb}({\cal O}_l(\bar S), \bar *_l)\ar[r]\ar[d]&1\\
    1\ar[r]&1\ar[r]&G^{n-l}\ar[r]\ar[d]&G^n\ar[r]^{q_l}\ar[d]&G^l\ar[r]\ar[d]&1\\
    &&1&1&1&}}

\centerline{Diagram 6}

Hence, we can apply Lemma \ref{nasl} again to conclude
that the dotted part of the top sequence 
in Diagram 6 is exact. 
The last part of Theorem \ref{mt} for $p_l$, also follows from a
similar argument.

\end{proof}

 Recall that if the action of $G$ on $X$ is properly
discontinuous and free, then by Theorem \ref{Xic} 
$p:{\cal O}_n(X,G)\to {\cal O}_{n-1}(X,G)$
is a locally trivial fibration. 
In the following proposition we prove that, in the
non-free action case, $p$ is not even a quasifibration. This proposition is
essentially [\cite{Rou20}, Proposition 2.11] with a correction in its hypothesis.

\begin{prop}\label{finallemma} Let $G$ be a finite group, 
  acting effectively on a connected 
  manifold $X$ of dimension $k \geq 2$, with isolated fixed points and
  has at least
    one fixed point. Assume that $X$ is either compact or has finitely generated
    $k$-th integral homology group. Then the map $p:{\cal O}_n(X,G)\to {\cal O}_{n-1}(X,G)$ is not
    a quasifibration.\end{prop}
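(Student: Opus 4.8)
The plan is to argue by contradiction: suppose $p:{\cal O}_n(X,G)\to {\cal O}_{n-1}(X,G)$ is a quasifibration. By definition of quasifibration (\cite{DT58}), the natural map from the actual fiber over each point of the base to the homotopy fiber is a weak equivalence, so one gets a long exact sequence in homotopy groups relating $\pi_*$ of the total space, the base, and the \emph{set-theoretic} fiber $p^{-1}(y)$ for every $y\in {\cal O}_{n-1}(X,G)$. The key observation is that the homeomorphism type of the fiber $p^{-1}(y)=X-\{Gx_1,\dots,Gx_{n-1}\}$ \emph{jumps} as the $(n-1)$-tuple $(x_1,\dots,x_{n-1})$ moves from a point with all isotropy trivial to a point where some coordinate is the given fixed point $x_0\in X$. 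I would first set up the two relevant fibers explicitly: the generic fiber $F_{\mathrm{gen}}=X$ minus $(n-1)$ free orbits (each of cardinality $|G|$), and a degenerate fiber $F_{\mathrm{deg}}=X$ minus $(n-2)$ free orbits and the single point $x_0$. Since $X$ is connected of dimension $k\ge 2$, each of these is again connected; the point is that they are \emph{not} homotopy equivalent, and that is what will be incompatible with the long exact sequence forced by the quasifibration hypothesis.

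Next I would pin down the invariant that distinguishes the two fibers. The cleanest choice is the $k$-th homology (or cohomology): deleting a free orbit of $|G|$ points from a connected open $k$-manifold changes $H_k$ (or $H_k^{BM}$, or $H^k_c$) in a way governed by the number of deleted points, and likewise deleting the single point $x_0$. Concretely, for an open connected $k$-manifold, removing $m$ points raises the rank of $H_{k-1}$ by $m-1$ in the orientable closed case, or more robustly: $H_k(F_{\mathrm{gen}})$ and $H_k(F_{\mathrm{deg}})$, or their reduced $H_{k-1}$, have different ranks because $F_{\mathrm{gen}}$ has exactly $|G|$ more punctures than $F_{\mathrm{deg}}$. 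This is where the hypothesis that $X$ is compact, or that $H_k(X;{\Bbb Z})$ is finitely generated, enters: it guarantees these homology groups are finitely generated, so that ``different rank'' is a meaningful and non-vacuous statement, and it rules out pathologies where an infinite-type $H_k$ could absorb the change. For $k=2$ one may instead phrase this via $\pi_1$: $\pi_1(F_{\mathrm{gen}})$ and $\pi_1(F_{\mathrm{deg}})$ are free groups of different ranks. I would then invoke the standard fact (see \cite{DT58}) that if $p$ is a quasifibration over a path-connected base, the inclusions of any two fibers into the total space are related through the long exact sequence in a way that forces all fibers to have isomorphic homotopy groups — in particular isomorphic $\pi_1$, hence isomorphic $H_*$ — contradicting the rank computation above.

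The main obstacle I anticipate is the book-keeping needed to exhibit a degenerate point of the base that genuinely lies in ${\cal O}_{n-1}(X,G)$ and whose fiber is of the asserted degenerate type: one must choose $x_0$ to be the given fixed point and choose the other $n-2$ coordinates to be points with trivial isotropy lying in pairwise-distinct orbits, all distinct from $Gx_0=\{x_0\}$, which requires $X$ (equivalently $X$ minus a fixed point set that is isolated, hence at most countable) to contain enough free orbits — this is automatic since $X$ is a connected manifold of dimension $\ge 2$ and the fixed-point set is isolated, so its complement is connected and of positive measure. A secondary technical point is to make sure the ``two fibers have non-isomorphic homotopy type'' argument is applied correctly: quasifibrations only directly control the fiber up to \emph{weak} homotopy equivalence, but since all spaces in sight are manifolds (hence have the homotopy type of CW complexes), weak equivalence of fibers would already force isomorphic homology, which is all I need. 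Finally I would remark that this corrects the hypothesis of \cite{Rou20}, Proposition 2.11, the correction being precisely the finite-generation condition on $H_k(X;{\Bbb Z})$ (automatic when $X$ is compact), without which the $k$-th homology comparison could fail to detect the change.
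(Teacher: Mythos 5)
Your proposal is correct and follows essentially the same route as the paper: compare the fiber over a base point all of whose coordinates have trivial isotropy with the fiber over a point having one coordinate at the fixed point, observe that these are $X$ with different numbers of points removed and hence have non-isomorphic top-dimensional homology (this is where the finite-generation hypothesis enters), and conclude via the standard fact that all fibers of a quasifibration over a path-connected base are weakly homotopy equivalent. The paper's version is just a more compressed statement of exactly this argument (citing Hatcher rather than Dold--Thom for the weak-equivalence-of-fibers fact), so no further comparison is needed.
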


  \begin{proof} Consider two points in ${\cal O}_{n-1}(X,G)$, one with
    trivial and another with non-trivial isotropy
    group. Then the fibers of $p$ on these two
    points are $X$ with 
    different numbers of points removed. Hence they have 
    non-isomorphic $k$-th homology groups, and therefore they
    are not weak homotopy equivalent. On the other hand,
    for a quasifibration over a path connected space 
    any two fibers are weak homotopy equivalent (\cite{Hat03},
    chap. 4, p. 479).\end{proof}

  We end with the following final remark.

  \begin{rem}{\rm Note that a quasifibration induces a long exact sequence
      of homotopy groups (\cite{Hat03}) as in the case of a fibration. Therefore, if
      ${\cal O}_{n-1}(S,G)$ is aspherical (or simply $\pi_2({\cal O}_{n-1}(S,G))=\l 1\r$)
      and the action has at
      least one fixed point, then since by Theorem \ref{mt} $L(S, n-1)\neq \l 1\r$,
      $i_*$ is not injective and hence $p$ is not a quasifibration. See Remark \ref{asphericityC}
    for some examples when this hypothesis is satisfied.}\end{rem}

\section{Appendix: The category of orbifolds}
In this section we give a short introduction to the category of
orbifolds and a couple of examples used in this note.
We refer to the articles \cite{Thu91}, \cite{Che01} and \cite{KL14} 
for some fundamentals on orbifolds.

\begin{defn}\label{orbi-defn}{\rm A second countable and Hausdorff
    topological space $M$ 
    is called an {\it orbifold} of dimension
    $n$, if for any $x\in M$, there is a connected open neighbourhood
    $U_x$ of $x$ and a connected open subset $V_x\subset {\Bbb R}^n$ with a
    finite group $G_x$ acting effectively on $V_x$, so that there is a
    homeomorphism $\sigma_x:V_x/G_x\to U_x$. $(U_x,V_x,G_x)$ is
    called a {\it coordinate chart} at $x$. $M$
    is also called the {\it underlying topological space} of the
    orbifold $M$. Here we use the same notation both for the orbifold
    as well as the underlying space, which will be clear from the
    context.}\end{defn}

\begin{defn}\label{map}{\rm (\cite{KL14}, \S2.1) Given two orbifolds $M_1$ and $M_2$, a
    {\it map of orbifolds}
    $f:M_1\to M_2$ is a continuous map of the underlying spaces, satisfying the following
    properties. Given any $x\in M_1$, there are coordinate charts
    $(U_x,V_x,G_x)$
    of $x$ and  $(U_{f(x)},V_{f(x)},G_{f(x)})$ of $f(x)$ so that
    $f(U_x)\subset U_{f(x)}$. Furthermore, there is a
    homomorphism $\rho_x:G_x\to G_{f(x)}$ and a $\rho_x$-equivariant
    smooth map $g_x:V_x\to V_{f(x)}$ which is a lifting of
    $f|_{U_x}$. That is, the following diagram is commutative. The
    vertical maps in the diagram are the quotient maps.
    
    \centerline{
      \xymatrix{V_x\ar[r]^{g_x}\ar[d]&V_{f(x)}\ar[d]\\
        U_x\ar[r]^{f|_{U_x}}&U_{f(x)}
        }}}\end{defn}

  \begin{defn}{\rm (\cite{Thu91}, Definition 5.3.1)   Let $f:M_1\to
      M_2$ be as in Definition \ref{map}. Then $f$ is
      called an {\it orbifold covering map} if it is
    surjective and for each $y\in M_2$, there is a chart $(U_y,V_y,G_y)$
    of $y$ and pairwise disjoint charts $(U_x,V_x,G_x)$ for $x\in
    f^{-1}(y)$, each satisfying the properties in Definition \ref{map}.
    In addition, we
    require that $f^{-1}(U_y)$ is equal to the union of $\{U_x\ |\ x\in
    f^{-1}(y)\}$ and for each $x\in f^{-1}(y)$, $\rho_x$ is
    injective and $U_x$ is homeomorphic to
    $V_y/\rho_x(G_x)$ making the following diagram commutative.

    \centerline{
      \xymatrix{U_x\ar[r]^{f|_{U_x}}&U_{y}\\
        V_y/\rho_x(G_x)\ar[r]\ar[u]&V_y/G_y\ar[u]^{\sigma_y}
        }}
  }\end{defn}

\begin{exm}\label{cov-map-example}{\rm Let $G$ be a group acting
    effectively and properly discontinuously on a connected manifold $M$. Then
    $M/G$ is an orbifold (\cite{Thu91}, Proposition 5.2.6) and the quotient map $M\to M/G$ is an
    orbifold covering map (\cite{Thu91}, p. 236). Furthermore, there is the following short
    exact sequence (\cite{Che01}, (4) in Proposition 2.2.3] or
    \cite{KL14}, \S2.12, p.110).

    $$1\to \pi_1(M)\to \pi_1^{orb}(M/G)\to G\to 1.$$}\end{exm}

\begin{exm}\label{map-example}{\rm (\cite{PR20}, Proposition 4.1,
    Remark 4.2)
    For $i=1,2$, let $G_i$ be a group 
    acting effectively and properly discontinuously on a smooth
    manifold $M_i$. Let
    $\rho:G_1\to G_2$ be a homomorphism and $f:M_1\to M_2$ is a
    $\rho$-equivariant smooth map. Then the induced map $M_1/G_1\to
    M_2/G_2$ is a map of orbifolds.}\end{exm}

\newpage
\bibliographystyle{plain}
\ifx\undefined\bysame
\newcommand{\bysame}{\leavevmode\hbox to3em{\hrulefill},}
\fi

\end{document}